\newtheorem{theorem}{Theorem}[section]
\newtheorem*{Acknowledgement}{\textnormal{\textbf{Acknowledgement}}}
\newtheorem*{Question}{\textnormal{\textbf{Question}}}
\newtheorem{corollary}[theorem]{Corollary}
\newtheorem{note}[theorem]{Note}
\theoremstyle{definition}
\newtheorem{prop}[theorem]{Properties}
\newtheorem{definition}[theorem]{Definition}
\newtheorem{example}[theorem]{Example}
\newtheorem{Open Prob}[theorem]{Open Problem}
\newtheorem{remark}[theorem]{Remark}
\theoremstyle{remark}
\numberwithin{equation}{section}
\def\DJ{\leavevmode\setbox0=\hbox{D}\kern0pt\rlap{\kern.04em\raise.188\ht0\hbox{-}}D}
\begin{document}

\title[Kannan type equicontraction mappings]{On the extended version of Krasnosel'ski\u{i}'s fixed point theorem for Kannan type equicontraction mappings}

\author[S.\ Pal, A.\ Bera, L.K.\ Dey]
{Subhadip Pal$^{1}$, Ashis Bera$^{2}$, Lakshmi Kanta Dey$^{3}$}

\address{{$^{1}$}   Subhadip Pal,
                    Department of Mathematics,
                    National Institute of Technology
                    Durgapur, India}
                    \email{palsubhadip2@gmail.com}
\address{{$^{2}$}   Ashis Bera,
                    School of Advanced Sciences,
                    Department of Mathematics,
                    VIT Chennai, India}
                    \email{beraashis.math@gmail.com}
                    
\address{{$^{3}$}   Lakshmi Kanta Dey,
                    Department of Mathematics,
                    National Institute of Technology
                    Durgapur, India}
                    \email{lakshmikdey@yahoo.co.in}

\keywords{Hausdorff measure of noncompactness, Compact mapping, Kannan type equicontraction, Initial value problem. \\
\indent 2020 {\it Mathematics Subject Classification}.  $47$H$10$, $54$A$20$.}

\begin{abstract}
	
A sufficient condition is established for the existence of a solution to the equation $\mathcal{T}(u,\mathcal{C}(u))=u$, by considering a class of Kannan type equicontraction mappings $\mathcal{T}:\mathcal{A}\times \overline{\mathcal{C}(\mathcal{A})}\to \mathbb{Y}$, where $\mathcal{A}$ is a convex, closed and bounded subset of a Banach space $\mathbb{Y}$ and $\mathcal{C}$ is a compact mapping. To fulfil the desired purpose, we engage the Sadovskii's theorem, involving the measure of noncompactness. The relevance of the acquired results has been illustrated by considering a certain class of initial value problems.
\end{abstract}		
\maketitle

\setcounter{page}{1}

%
%

\section{Introduction and Preliminaries}
	   

The objective of the present article is to extend the Krasnosel'ski\u{i} fixed point Theorem to an implicit form by considering a special class of contraction mappings. Recently, D. Wardowski has provided an extension version by using F-contraction, which is introduced by himself \cite{War1, War2} in $2018$.  Contrary to some recent works \cite{Kara, War} that only deals with the contraction mappings which are continuous in the first co-ordinate, the class of contraction mappings, as considered by us, includes some discontinuous mappings also. In our results, the domain of definition of any generic member (of our class), say $\mathcal{T}$, has been considered over a product space, where one of the factor spaces is heavily dependent on the concerned compact mapping (also continuous). The readers are referred to \cite{Pre, Pac}, for a detailed study on contraction mapping over product spaces.

\medskip

In the next Section, we consider two different classes, corresponding to the domain of the Kannan type equicontraction mapping $\mathcal{T}$, and derive some basic properties concerning those individual classes. We provide examples to conclude that the class of Kannan type equicontraction mappings is independent of the class of general equicontraction mappings and equicontractive singularly mappings, which have been discussed in the literature \cite{Kara, War}. Finally, in the last Section, we discuss the utility of our result in the setting of a Banach space to ensure the existence of a solution to a particular class of initial value problems. Given any Banach space $\mathbb{Y}$, the closed unit ball of $\mathbb{Y}$ is denoted by $\mathcal{B}(0,1):=\{u\in \mathbb{Y}:~\|u\|\leq 1\}$. In our literature, $e_1$ refers the unit vector of the said Banach space and $\pi_1$ refers the projection to the first component. $l_p$ denotes the space of all $p$-power summable sequences of real numbers, with the $p$-norm.

\medskip
	    
At this point, we state the Krasnosel'ski\u{i} fixed point theorem \cite{Kra, Bur, Prz, Vet}, which is the integral theme of the present work: 
\begin{theorem}\cite{Kra}
``Let $\mathcal{H}$ be a nonempty closed, convex, bounded subset of a Banach space $\mathbb{Y}$. Suppose $\mathcal{T},\mathcal{C}:\mathcal{H}\to \mathbb{Y}$ are two mappings such that $\mathcal{T}$ is a contraction and $\mathcal{C}$ is a compact mapping. If for $u,v\in \mathcal{H}, \mathcal{T}(u)+\mathcal{C}(v)\in \mathcal{H},$
then there is a $v\in \mathcal{H}$ satisfies $\mathcal{T}(v) + \mathcal{C}(v) = v."$  
\end{theorem}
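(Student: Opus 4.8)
The plan is to reduce the coupled equation $\mathcal{T}(v)+\mathcal{C}(v)=v$ to a single fixed point problem that can be handled by Schauder's theorem. Since $\mathcal{T}$ is a contraction, fix a constant $k\in[0,1)$ with $\|\mathcal{T}(x)-\mathcal{T}(x')\|\le k\|x-x'\|$ for all $x,x'\in\mathcal{H}$. For each fixed $y\in\mathcal{H}$ I would consider the auxiliary map $x\mapsto \mathcal{T}(x)+\mathcal{C}(y)$. By the standing hypothesis $\mathcal{T}(u)+\mathcal{C}(v)\in\mathcal{H}$, this map sends $\mathcal{H}$ into $\mathcal{H}$, and it is a contraction in $x$ with the same constant $k$. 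As $\mathcal{H}$ is closed, hence complete, the Banach contraction principle furnishes a unique point $\Phi(y)\in\mathcal{H}$ solving $\Phi(y)=\mathcal{T}(\Phi(y))+\mathcal{C}(y)$. A fixed point of $\Phi$ is then exactly a solution of the original equation, so it suffices to find $v\in\mathcal{H}$ with $\Phi(v)=v$.

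Next I would establish that $\Phi$ is well behaved by deriving the basic Lipschitz estimate. Writing the defining relations for $\Phi(y_1)$ and $\Phi(y_2)$ and subtracting gives
\[
\|\Phi(y_1)-\Phi(y_2)\| \le k\,\|\Phi(y_1)-\Phi(y_2)\| + \|\mathcal{C}(y_1)-\mathcal{C}(y_2)\|,
\]
whence
\[
\|\Phi(y_1)-\Phi(y_2)\| \le \frac{1}{1-k}\,\|\mathcal{C}(y_1)-\mathcal{C}(y_2)\|.
\]
In other words $\Phi=(I-\mathcal{T})^{-1}\circ\mathcal{C}$, where $(I-\mathcal{T})^{-1}$ is globally Lipschitz with constant $1/(1-k)$. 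Since $\mathcal{C}$ is in particular continuous, this inequality shows at once that $\Phi:\mathcal{H}\to\mathcal{H}$ is continuous.

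The decisive step is compactness. Because $\mathcal{C}$ is a compact mapping, $\overline{\mathcal{C}(\mathcal{H})}$ is compact, and the displayed estimate exhibits $\Phi$ as the composition of $\mathcal{C}$ with the Lipschitz map $(I-\mathcal{T})^{-1}$; as the continuous image of a relatively compact set is relatively compact, $\Phi(\mathcal{H})$ is relatively compact, i.e. $\Phi$ is itself a compact self-map of the nonempty, closed, convex, bounded set $\mathcal{H}$. Schauder's fixed point theorem then yields $v\in\mathcal{H}$ with $\Phi(v)=v$, which is the required solution $\mathcal{T}(v)+\mathcal{C}(v)=v$. I expect the main obstacle to be precisely the verification that $\Phi(\mathcal{H})$ is relatively compact: everything hinges on transferring the qualitative compactness of $\mathcal{C}$ to $\Phi$ through the Lipschitz continuity of $(I-\mathcal{T})^{-1}$, and it is here that the contraction hypothesis on $\mathcal{T}$ is genuinely used, since a mere continuity assumption on $\mathcal{T}$ would not guarantee that $I-\mathcal{T}$ is invertible with Lipschitz inverse.
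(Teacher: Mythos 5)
Your proposal is correct: it is the classical proof of Krasnosel'ski\u{i}'s theorem, and every step checks out --- the self-map property of $x\mapsto\mathcal{T}(x)+\mathcal{C}(y)$ follows from the stated hypothesis, the Banach contraction principle applies because $\mathcal{H}$ is closed (hence complete), the Lipschitz bound $\|\Phi(y_1)-\Phi(y_2)\|\leq\frac{1}{1-k}\|\mathcal{C}(y_1)-\mathcal{C}(y_2)\|$ is derived correctly, and since a Lipschitz map carries totally bounded sets to totally bounded sets, $\Phi(\mathcal{H})$ is relatively compact in the complete space $\mathbb{Y}$, so Schauder's theorem applies. Note, however, that there is no proof in the paper to compare against: the statement is quoted verbatim from the literature (the citation to Krasnosel'ski\u{i}) and serves only as background. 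What is worth pointing out is that your argument is precisely the template the authors generalize in their own Theorem 2.4: there, for each fixed $v\in\overline{\mathcal{C}(\mathcal{A})}$ the Kannan contraction $\mathcal{T}(\cdot,v)$ is solved to produce a map $f(v)$ (your $\Phi$, with $(I-\mathcal{T})^{-1}\circ\mathcal{C}$ replaced by $f\circ\mathcal{C}$), its continuity is established via the contraction estimate (the analogue of your Lipschitz bound, with constant $\frac{k}{1-k}$ coming from the Kannan condition), and Schauder's theorem is applied on $\overline{Co(f(\mathcal{C}(\mathcal{A})))}$. The only structural difference in the implicit setting is that the ``sum'' decomposition $\mathcal{T}(u)+\mathcal{C}(v)$ is replaced by the implicit equation $\mathcal{T}(u,\mathcal{C}(u))=u$, so the inverse $(I-\mathcal{T})^{-1}$ has no closed-form description and must be handled abstractly as a fixed-point selection; your closing remark about where the contraction hypothesis is genuinely used transfers verbatim to that setting.
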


Among various existing notions of measure of noncompactness, we use the setting of Hausdorff measure of noncompactness in our result. For more informations about properties of measure of noncompactness and in this direction, the readers are referred to \cite{BG, Kara, BJMSV,War, AKPRS} and the references therein.

\begin{definition}\cite{BJMSV}
``Let $(\zeta,\rho)$ be a metric space and $\Omega$ be a nonempty bounded subset of $\zeta.$ The Hausdorff measure of noncompactness, Kuratowski measure of noncompactness are denoted by $\beta(\Omega)$, $\alpha(\Omega)$ respectively and defined as
\[\beta(\Omega)=\inf~\left\{\varepsilon>0:~\Omega\subset\bigcup\limits_{i=1}^pB(u_i, r_i),~u_i\in \zeta,~r_i<\varepsilon~(i=1, 2, \dots, p),~p\in\mathbb{N}\right\}\] 
and
\[\alpha(\Omega)=\inf~\left\{\varepsilon>0:~\Omega\subset\bigcup\limits_{j=1}^qB_j,~B_j\subset\zeta,~\mathrm{diam}(B_j)<\varepsilon~(j=1, 2, \dots, q),~ q\in\mathbb{N}\right\}\]
respectively."
\end{definition}
We now state some basic properties of $\beta$ that we require in our subsequent section.
\begin{prop}\cite{AKPRS}
\begin{itemize}
\item[a)]$\beta(\Omega)=0 \Longleftrightarrow \Omega$ is totally bounded.
\item[b)]$\Omega_1\subseteq\Omega_2$ implies $\beta(\Omega_1)\leq\beta(\Omega_2)$.
\item[c)]$\beta(\Omega)=\beta(\overline{\Omega})$.
\item[d)]$\beta(\Omega_1\cup\Omega_2)=\max\{\beta(\Omega_1),\beta(\Omega_2)\}$.
\item[e)]$\beta(p\Omega)=|p|\beta(\Omega),~ p\in \mathbb{R}$.
\item[f)]$\beta(\Omega_1+\Omega_2)\leq \beta(\Omega_1)+\beta(\Omega_2)$.
\end{itemize}
\end{prop}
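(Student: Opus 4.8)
The plan is to argue all six items directly from the definition of $\beta$ as an infimum of admissible cover radii, treating them in a dependency order so that the later, more structural items can reuse the earlier ones. I would first dispose of (a) and (b), which are essentially tautological. For (a), unwinding the definition shows that $\beta(\Omega)=0$ means precisely that for every $\varepsilon>0$ there is a finite cover of $\Omega$ by balls of radius $<\varepsilon$, i.e. $\Omega$ admits a finite $\varepsilon$-net for every $\varepsilon$, which is exactly total boundedness. For (b), I would observe that when $\Omega_1\subseteq\Omega_2$ any finite ball-cover of $\Omega_2$ is automatically a ball-cover of $\Omega_1$; hence every $\varepsilon$ admissible for $\Omega_2$ is admissible for $\Omega_1$, and passing to the infimum gives $\beta(\Omega_1)\le\beta(\Omega_2)$. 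Monotonicity (b) is the workhorse for the remaining items.

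For (c), the bound $\beta(\Omega)\le\beta(\overline{\Omega})$ is immediate from (b). For the reverse, I would take a finite cover of $\Omega$ by balls of radius $r_i<\varepsilon$ and slightly enlarge each radius to some $r_i'$ with $r_i<r_i'<\varepsilon$; since the closed ball of radius $r_i$ lies inside the open ball of radius $r_i'$ and closure commutes with finite unions, taking closures yields a finite cover of $\overline{\Omega}$ still by balls of radius $<\varepsilon$, whence $\beta(\overline{\Omega})\le\varepsilon$ and then $\le\beta(\Omega)$. The only point needing care here is exploiting the strict inequality $r_i<\varepsilon$ to absorb the passage from open to closed balls. For (d), the direction $\max\{\beta(\Omega_1),\beta(\Omega_2)\}\le\beta(\Omega_1\cup\Omega_2)$ is two applications of (b); for the reverse, given $\varepsilon>\max\{\beta(\Omega_1),\beta(\Omega_2)\}$ I would take finite covers of $\Omega_1$ and of $\Omega_2$ by balls of radius $<\varepsilon$ and simply take their union, producing a finite cover of $\Omega_1\cup\Omega_2$ with the same radius bound.

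The last two items are where the normed structure of $\mathbb{Y}$ enters. For (e), assuming $p\neq 0$ (the case $p=0$ being trivial, since $0\cdot\Omega=\{0\}$ is totally bounded), I would use that the dilation $x\mapsto px$ is a bijection satisfying $\|px-py\|=|p|\,\|x-y\|$, so that $x\in B(u_i,r_i)$ if and only if $px\in B(pu_i,|p|r_i)$; thus ball-covers of $\Omega$ and of $p\Omega$ are in bijection with radii scaled by $|p|$, and the infima scale accordingly. For (f), given covers $\Omega_1\subseteq\bigcup_i B(u_i,r_i)$ and $\Omega_2\subseteq\bigcup_j B(v_j,s_j)$ with $r_i<\varepsilon_1$ and $s_j<\varepsilon_2$, the triangle inequality gives $\|(u+v)-(u_i+v_j)\|<r_i+s_j<\varepsilon_1+\varepsilon_2$, so the finitely many balls $B(u_i+v_j,\,r_i+s_j)$ cover the Minkowski sum $\Omega_1+\Omega_2$; letting $\varepsilon_1\downarrow\beta(\Omega_1)$ and $\varepsilon_2\downarrow\beta(\Omega_2)$ yields the subadditivity.

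Since every item reduces to elementary manipulation of finite ball-covers, I do not anticipate a genuine obstacle. The only places demanding attention are the strict-inequality bookkeeping in (c) and the triangle-inequality estimate in (f), together with the harmless standing convention that $\Omega$ is nonempty throughout.
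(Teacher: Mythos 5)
Your proposal cannot be checked against a proof in the paper for a simple reason: the paper does not prove this proposition at all. It is stated as a list of known properties quoted from the monograph \cite{AKPRS}, and the authors use them as black boxes in the proof of their Theorem 2.8. So the relevant question is only whether your blind verification is sound, and it is. Items (a) and (b) are indeed immediate from the definition of $\beta$ as an infimum over admissible covering radii; the radius-enlargement trick in (c) (using $r_i<r_i'<\varepsilon$ to absorb the passage from open balls to their closures, together with the fact that closure commutes with finite unions) is exactly the standard argument; the two-sided argument in (d), the dilation bijection in (e) with the separate trivial case $p=0$, and the product cover $\bigl\{B(u_i+v_j,\,r_i+s_j)\bigr\}$ in (f) are all correct. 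Two small points would deserve a sentence each in a fully written version. First, in (a): the balls in the paper's definition of $\beta$ have centers at arbitrary points $u_i\in\zeta$, while total boundedness is often defined via finite $\varepsilon$-nets with centers in $\Omega$ itself; the two formulations are reconciled by the usual doubling-of-radius argument (choose a point of $\Omega$ in each ball meeting $\Omega$ and replace $r_i$ by $2r_i$), so this costs nothing but should be acknowledged. Second, you rightly observe that (e) and (f) only make sense when the ambient space carries a linear structure --- the paper states the definition in a general metric space $(\zeta,\rho)$ but applies these two items in the Banach space $\mathbb{Y}$, and your proof makes that dependence explicit, which is arguably cleaner than the paper's silent switch of setting.
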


\medskip

Darbo\cite{Dar} originally proved a remarkable result which assimilates the measure of noncompactness with the fixed point theory. Later Sadovskii provided a sufficient condition for a continuous mapping to have a fixed point. Here, we quote the result due to Sadovskii\cite{Sad} which will be used in our first result.   
\begin{theorem}\cite{Sad}
``Let $\mathbb{Y}$ be a Banach space and $\mathcal{H}$ be a nonempty closed convex bounded subset of $\mathbb{Y}$. Suppose $\mathcal{T}:\mathcal{H} \to \mathcal{H}$ is a continuous mapping. If for any nonempty subset $\mathcal{A}$ of $\mathcal{H}$ with $\mu (\mathcal{A}) > 0$ we have
\[\mu (\mathcal{T}(\mathcal{A})) < \mu (\mathcal{A})\] where $\mu$ is a regular measure of noncompactness in $\mathbb{Y},$ then $\mathcal{T}$ has at least one fixed point in $\mathcal{H}."$
\end{theorem}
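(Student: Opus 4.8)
The plan is to reduce Sadovskii's theorem to the classical Schauder fixed point theorem by exhibiting a nonempty compact convex subset of $\mathcal{H}$ that $\mathcal{T}$ maps into itself. The condensing hypothesis $\mu(\mathcal{T}(\mathcal{A}))<\mu(\mathcal{A})$ is never applied to $\mathcal{H}$ itself; rather, it is used to force a carefully constructed invariant set to have measure zero, hence to be compact.

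First I would fix an arbitrary $x_0\in\mathcal{H}$ and consider the family $\Sigma$ of all nonempty closed convex subsets $K\subseteq\mathcal{H}$ satisfying $x_0\in K$ and $\mathcal{T}(K)\subseteq K$. Since $\mathcal{H}\in\Sigma$, this family is nonempty, so I may set $K_0:=\bigcap_{K\in\Sigma}K$. An intersection of closed convex sets is closed and convex, it still contains $x_0$, and $\mathcal{T}(K_0)\subseteq\mathcal{T}(K)\subseteq K$ for every $K\in\Sigma$ forces $\mathcal{T}(K_0)\subseteq K_0$; thus $K_0$ is itself the smallest member of $\Sigma$.

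The crucial step is the identity $K_0=\overline{\mathrm{conv}}\big(\{x_0\}\cup\mathcal{T}(K_0)\big)$. Write $K_1$ for the right-hand side. Since $K_0$ is closed and convex and contains both $x_0$ and $\mathcal{T}(K_0)$, one inclusion $K_1\subseteq K_0$ is immediate; consequently $\mathcal{T}(K_1)\subseteq\mathcal{T}(K_0)\subseteq K_1$, so $K_1\in\Sigma$ and minimality of $K_0$ gives $K_0\subseteq K_1$, whence equality. Now I would compute the measure of noncompactness: using that a regular measure of noncompactness is invariant under passage to the closed convex hull, together with the behaviour recorded in properties (c)--(d) (closure invariance and the fact that adjoining the single point $x_0$, which has measure $0$, does not change $\mu$), I obtain $\mu(K_0)=\mu\big(\overline{\mathrm{conv}}(\{x_0\}\cup\mathcal{T}(K_0))\big)=\mu(\mathcal{T}(K_0))$. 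If $\mu(K_0)>0$, then the condensing hypothesis applied to $\mathcal{A}=K_0$ yields $\mu(\mathcal{T}(K_0))<\mu(K_0)$, contradicting this equality. Hence $\mu(K_0)=0$, so by regularity $K_0$ is (relatively) compact, and being closed it is compact.

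Finally, $\mathcal{T}$ restricts to a continuous self-map of the nonempty compact convex set $K_0$, and Schauder's fixed point theorem delivers a point $v\in K_0\subseteq\mathcal{H}$ with $\mathcal{T}(v)=v$. The main obstacle I anticipate is justifying the invariance of $\mu$ under the closed convex hull operation: this is not listed among the stated properties of $\beta$ and must be imported from the definition of a \emph{regular} measure of noncompactness (the Hausdorff measure does satisfy it). The minimality argument establishing $K_0=\overline{\mathrm{conv}}(\{x_0\}\cup\mathcal{T}(K_0))$ is the conceptual heart of the proof, but once it is in place the measure computation and the reduction to Schauder's theorem are routine.
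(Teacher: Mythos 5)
The paper does not prove this statement at all: it is quoted verbatim (in quotation marks) from Sadovskii's original paper \cite{Sad} and used as a black box in the proof of Theorem \ref{th2.8}, so there is no internal proof to compare yours against. Your argument is the classical proof of Sadovskii's theorem and it is correct: fixing $x_0\in\mathcal{H}$, forming the minimal nonempty closed convex $\mathcal{T}$-invariant set $K_0$ containing $x_0$, establishing $K_0=\overline{\mathrm{conv}}\big(\{x_0\}\cup\mathcal{T}(K_0)\big)$ by minimality, deducing $\mu(K_0)=\mu(\mathcal{T}(K_0))$, concluding $\mu(K_0)=0$ (hence compactness of $K_0$) from the condensing hypothesis, and finishing with Schauder's theorem is exactly the standard route found in the references the paper itself cites \cite{AKPRS, BG}. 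Two details you handle or flag correctly deserve emphasis: first, $K_0\neq\emptyset$ because $x_0$ lies in every member of the family $\Sigma$, so applying the hypothesis to $\mathcal{A}=K_0$ is legitimate; second, the invariance $\mu\big(\overline{\mathrm{conv}}(A)\big)=\mu(A)$ and the identity $\mu(\{x_0\}\cup A)=\mu(A)$ are indeed \emph{not} among the properties of $\beta$ listed in the paper, but they are part of the axioms of a \emph{regular} measure of noncompactness (and hold for the Hausdorff and Kuratowski measures), which is precisely the hypothesis in the statement. So the obstacle you anticipate is discharged by the regularity assumption rather than by anything in the paper, and your proof is complete as written.
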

			  
\section{Main Results}

Let us begin with the definition of Kannan type equicontraction mapping:

\begin{definition}
Let $\mathbb{Y}$ be a Banach space and $\mathbb{S}$ be a closed subset of a complete metric space $(\mathbb{X}, \rho)$. A mapping $\mathcal{T}:\mathbb{Y}\times \mathbb{S}\to \mathbb{Y}$ is called Kannan type equicontraction mapping if for any $u_1,u_2\in \mathbb{Y}$ and $v\in\mathbb{S}$, 
\[\|\mathcal{T}(u_1,v)-\mathcal{T}(u_2,v)\|\leq k(\|\mathcal{T}(u_1,v)-u_1\|+\|\mathcal{T}(u_2,v)-u_2\|)\] 
for some $k\in [0,\frac{1}{2}).$ 
\end{definition} 

\medskip

It is clear that for a Kannan type equicontraction mapping, $\mathcal{T}:\mathcal{A}\times\mathcal{A}\to\mathcal{A}$, the corresponding family $\mathcal{S}_u=\big\{u\mapsto \mathcal{T}(u, v):v\in\mathcal{A}\big\}$ contains the Kannan contractions. 

\medskip

Kannan type equicontraction condition is independent from equicontractive singularly condition and general equicontraction condition. The situation is illustrated in the following example.

\begin{example}\label{2.6}
Let $\mathcal{A}=[0, 1]\subseteq \mathbb{R}$ and let $\mathcal{C}:\mathcal{A}\to\mathcal{A}$ be a compact mapping such that $\mathcal{C}(u)=u.$ Let $\mathcal{T}:\mathcal{A}\times \overline{\mathcal{C}(\mathcal{A})}\to \mathcal{A}$ be a mapping so that 
\[\mathcal{T}(u, v)=\begin{cases}
\frac{u+v}{4}, & \quad (u, v)\in ([0, 1]\times [0, 1])\setminus \{(1, 1)\}, \\
\frac{1}{5}, & \quad(u, v)=(1, 1).
\end{cases}\]
It is easy to see that $\mathcal{T}$ satisfies Kannan type equicontraction condition.

\noindent Indeed, let $u_1,u_2,\in \mathcal{A}$ and let $v\in \overline{\mathcal{C}(\mathcal{A})}$ be fixed. On simplification, we get
\[\big|\mathcal{T}(u_1,v)-u_1\big|+\big|\mathcal{T}(u_2,v)-u_2\big| \geq \left|\frac{3u_1-3u_2}{4}\right|,\]
and  
\[\big|\mathcal{T}(u_1,v)-\mathcal{T}(u_2,v)\big|=\left|\frac{u_1-u_2}{4}\right|.\] 
Consequently,
\[\big|\mathcal{T}(u_1,v)-\mathcal{T}(u_2,v)\big|\leq \frac{2}{5}\Big(\big|\mathcal{T}(u_1,v)-u_1\big|+\big|\mathcal{T}(u_2,v)-u_2\big|\Big).\]
Moreover,
\[\sup_{(u_1,v)\in ([0,1]\times [0,1])\setminus (1,1)}\left|\mathcal{T}(u_1,v)-\frac{1}{5}\right|= \sup_{(u_1,v)\in ([0,1]\times [0,1])\setminus (1,1)}\left|\frac{5u_1+5v-4}{20}\right|=0.3,\]
and, 
\begin{align*}
\inf_{(u_1,v)\in ([0,1]\times [0,1])\setminus (1,1)}\left(\big|\mathcal{T}(u_1,v)-u_1\big|+\frac{4}{5}\right)& =\inf_{(u_1,v)\in ([0,1]\times [0,1])\setminus (1,1)}\left(\left|\frac{3u_1-v}{4}\right|+\frac{4}{5}\right)\\
& =0.8.
\end{align*} 
If we choose $k=\frac{2}{5}$, then for any $u_1,u_2\in \mathcal{A},$
\[\big|\mathcal{T}(u_1,v)-\mathcal{T}(u_2,v)\big|\leq k\Big(\big|\mathcal{T}(u_1,v)-u_1\big|+\big|\mathcal{T}(u_2,v)-u_2\big|\Big),\]
as desired. Since $\mathcal{T}$ is not continuous at $(1, 1)$, $\mathcal{T}$ does not satisfy equicontractive singularly condition as well as general equicontraction condition.
\end{example}

In the next theorem we have used the classical Schauder's theorem to check whether a solution to the equation $\mathcal{T}(u, \mathcal{C}(u))=u$ exists by considering the compact mapping in the sense of Krasnosel'ski\u{i}.

\begin{theorem}\label{th2.4}
Let $\mathcal{A}$ be a bounded, convex, closed subset of a Banach space $\mathbb{Y}$ and $\mathcal{C}:\mathcal{A}\to \upzeta$ be a compact mapping, where $(\upzeta, \rho)$ is a complete metric space. If a mapping $\mathcal{T}:\mathcal{A}\times \overline{\mathcal{C}(\mathcal{A})}\to \mathcal{A}$ satisfies Kannan type equicontraction condition then $\mathcal{T}(u, \mathcal{C}(u))=u$ has a solution in $\mathcal{A}.$ 
\end{theorem}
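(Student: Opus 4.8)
The plan is to convert the implicit equation $\mathcal{T}(u,\mathcal{C}(u))=u$ into an ordinary fixed point problem for a single self-map of $\mathcal{A}$ and then invoke Schauder's theorem. For each fixed $v\in\overline{\mathcal{C}(\mathcal{A})}$ consider the section $\mathcal{T}_v:\mathcal{A}\to\mathcal{A}$ given by $\mathcal{T}_v(u)=\mathcal{T}(u,v)$. Since $\mathcal{A}$ is closed in the Banach space $\mathbb{Y}$ it is complete, and the Kannan type equicontraction condition says precisely that every $\mathcal{T}_v$ is a Kannan contraction with the one constant $k\in[0,\tfrac12)$. Hence Kannan's fixed point theorem supplies, for each $v$, a \emph{unique} point $g(v)\in\mathcal{A}$ with $\mathcal{T}(g(v),v)=g(v)$. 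By this uniqueness, $\mathcal{T}(u,\mathcal{C}(u))=u$ holds if and only if $u=g(\mathcal{C}(u))$, so it suffices to produce a fixed point of $\Phi:=g\circ\mathcal{C}:\mathcal{A}\to\mathcal{A}$.

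The heart of the argument is the continuity of the selection $g$. For $v_1,v_2\in\overline{\mathcal{C}(\mathcal{A})}$, writing $g(v_i)=\mathcal{T}(g(v_i),v_i)$ and inserting the intermediate term $\mathcal{T}(g(v_2),v_1)$, I would estimate
\[
\|g(v_1)-g(v_2)\|\le\|\mathcal{T}(g(v_1),v_1)-\mathcal{T}(g(v_2),v_1)\|+\|\mathcal{T}(g(v_2),v_1)-\mathcal{T}(g(v_2),v_2)\|.
\]
Applying the Kannan type condition in the first slot (at the common parameter $v_1$, with $u_1=g(v_1)$, $u_2=g(v_2)$) and using $\mathcal{T}(g(v_1),v_1)=g(v_1)$ collapses the first summand to at most $k\,\|\mathcal{T}(g(v_2),v_1)-\mathcal{T}(g(v_2),v_2)\|$, whence
\[
\|g(v_1)-g(v_2)\|\le(1+k)\,\|\mathcal{T}(g(v_2),v_1)-\mathcal{T}(g(v_2),v_2)\|.
\]
Thus $g$ is continuous at $v_2$ as soon as $v\mapsto\mathcal{T}(g(v_2),v)$ is continuous at $v_2$. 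This is the step I expect to be the main obstacle: the Kannan condition controls $\mathcal{T}$ only in its first variable, so continuity of $g$ must be extracted from the regularity of $\mathcal{T}$ in the second (compact) variable. This is consistent with the philosophy of the paper, where discontinuity is permitted in the first coordinate (as in Example~\ref{2.6}) while the dependence on the second coordinate, on which the compact map acts, is the one that must be tamed.

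Once $g$ is known to be continuous, the remainder is routine. Because $\mathcal{C}$ is a compact mapping, $\overline{\mathcal{C}(\mathcal{A})}$ is compact, so $K:=g\big(\overline{\mathcal{C}(\mathcal{A})}\big)$ is a compact subset of $\mathcal{A}$ and $\Phi=g\circ\mathcal{C}$ is continuous with $\Phi(\mathcal{A})\subseteq K$. Set $D:=\overline{\mathrm{conv}}(K)$; by Mazur's theorem $D$ is compact, and since $\mathcal{A}$ is closed and convex with $K\subseteq\mathcal{A}$ we have $D\subseteq\mathcal{A}$, while $\Phi(D)\subseteq\Phi(\mathcal{A})\subseteq K\subseteq D$. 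Applying the classical Schauder fixed point theorem to the continuous self-map $\Phi|_D$ of the nonempty compact convex set $D$ yields a point $u^{\ast}$ with $u^{\ast}=\Phi(u^{\ast})=g(\mathcal{C}(u^{\ast}))$. Unwinding the definition of $g$ gives
\[
\mathcal{T}(u^{\ast},\mathcal{C}(u^{\ast}))=\mathcal{T}\big(g(\mathcal{C}(u^{\ast})),\mathcal{C}(u^{\ast})\big)=g(\mathcal{C}(u^{\ast}))=u^{\ast},
\]
which is the desired solution, completing the proof.
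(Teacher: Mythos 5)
Your reduction is structurally identical to the paper's own proof: for each fixed $v$ the section $u\mapsto\mathcal{T}(u,v)$ is a Kannan contraction on the complete set $\mathcal{A}$, hence has a unique fixed point $g(v)$, and one then feeds $\Phi=g\circ\mathcal{C}$ to Schauder's theorem (the paper calls your $g$ and your $D$ respectively $f$ and $\mathcal{S}=\overline{Co(f(\mathcal{C}(\mathcal{A})))}$). Your estimate
\[
\|g(v_1)-g(v_2)\|\le(1+k)\,\big\|\mathcal{T}(g(v_2),v_1)-\mathcal{T}(g(v_2),v_2)\big\|
\]
is correct and cleanly isolates what is needed: continuity of $v\mapsto\mathcal{T}(g(v_2),v)$ at $v_2$. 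But you then stop, acknowledging that you cannot derive this from the hypotheses. That is a genuine gap: the continuity of $g$, on which the whole Schauder step rests, is never established, and nothing in the hypotheses allows you to establish it, since the Kannan type equicontraction condition constrains $\mathcal{T}$ only in its first argument and is completely silent about the dependence on $v$.

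Moreover, this gap is not an artifact of your method --- the theorem as stated is false, so no completion of your argument (or of the paper's) exists. Take $\mathbb{Y}=\mathbb{R}$, $\mathcal{A}=[0,1]$, let $\mathcal{C}$ be the inclusion of $[0,1]$ into $\mathbb{R}$ (a compact mapping, exactly as in Example \ref{2.6}), and set $\mathcal{T}(u,v)=c(v)$, where $c(0)=1$ and $c(v)=0$ for $v\in(0,1]$. Every section $u\mapsto\mathcal{T}(u,v)$ is constant, so the Kannan condition holds trivially for any $k\in[0,\tfrac12)$; yet $\mathcal{T}(u,\mathcal{C}(u))=u$ reads $c(u)=u$, which has no solution, and $g=c$ is precisely the discontinuous fixed-point selection your estimate warns about. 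The paper's proof founders at the same point, but without flagging it: it proves only that each section is continuous at its own fixed point (a statement about the first variable), then asserts that this makes $\mathcal{T}$ jointly continuous at $(f(v),v)$ and hence $f$ continuous --- a non sequitur, refuted by the example above. The statement becomes true, and your write-up becomes a complete proof, once one adds continuity of $\mathcal{T}$ in the second variable for each fixed first argument (or, as in the paper's corollary, full continuity of $\mathcal{T}$): your displayed inequality then yields continuity of $g$ at once, and the Mazur--Schauder conclusion goes through verbatim.
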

\begin{proof}
Corresponding to each $v\in \overline{\mathcal{C}(\mathcal{A})}, \mathcal{T}(v):\mathcal{A}\to \mathcal{A}$ is a Kannan contraction and as $\mathcal{A}$ is complete then each $\mathcal{T}(v)$ has a unique fixed point in $\mathcal{A}.$
\[\implies \mathrm{Corresponding~ to~ each~ v\in\overline{\mathcal{C}(\mathcal{A})},~ \mathcal{T}(u, v)=u~ has~ a~ unique~ solution~ in~ \mathcal{A}.}\]    
Let $u=f(v)$ be the fixed point of $\mathcal{T}(v)$ in $\mathcal{A}.$ Define $f:\overline{\mathcal{C}(\mathcal{A})}\to \mathcal{A}$ such that $f(v)=\mathcal{T}(f(v), v)=\mathcal{T}(v)(f(v))$. Now, $f$ is continuous at any point of $\overline{\mathcal{C(\mathcal{A})}}$ is same as saying that the function $\mathcal{T}$ is continuous at the points $(f(v), v)$. The continuity of any mapping $\mathcal{T}(v)$ at its corresponding fixed point $f(v)$ ensures us $\mathcal{T}$ is continuous at the corresponding point $(f(v), v)$. 
Let $f(v_n)$ be a convergent sequence in $\mathcal{A}$ such that $f(v_n)\to f(v)$ as $n\to \infty.$ Now,
\begin{align*}
\big\|\mathcal{T}(v)(f(v_n))-\mathcal{T}(v)(f(v))\big\| & \leq k\Big(\big\|\mathcal{T}(v)(f(v_n))-f(v_n)\big\|+\big\|\mathcal{T}(v)(f(v))-f(v)\big\|\Big)\\
& \leq k\Big(\big\|\mathcal{T}(v)(f(v_n))-\mathcal{T}(v)(f(v))\big\|+\big\|f(v_n)-f(v)\big\|\Big).
\end{align*}
\[\implies \big\|\mathcal{T}(v)(f(v_n))-\mathcal{T}(v)(f(v))\big\|\leq\dfrac{k}{1-k}\big\|f(v_n)-f(v)\big\|\quad\to 0\quad\mathrm{as}\quad n\to\infty.\]
Therefore, the mapping $\mathcal{T}(v)$ is continuous at its fixed point $f(v)$. As $v$ is chosen arbitrarily, any mapping $\mathcal{T}(v)$ is continuous at its fixed point. Consequently, $\mathcal{T}$ is continuous at the points $(f(v), v)$. As a result, $f:\overline{\mathcal{C}(\mathcal{A})}\to \mathcal{A}$ is continuous. Accordingly $f\circ \mathcal{C}$ is also continuous on a closed, convex and bounded set $\mathcal{A}.$ Now, let $\mathcal{S}=\overline{Co(f(\mathcal{C}(\mathcal{A})))}\subseteq \mathcal{A}.$ Applying Schauder fixed point theorem to $f\circ \mathcal{C}$ on the compact set $\mathcal{S}$ we get, there exists $u_0\in\mathcal{A}$ such that $f(\mathcal{C}(u_0))=u_0.$ Therefore, \[\mathcal{T}(u_0, \mathcal{C}(u_0))=\mathcal{T}(f(\mathcal{C}(u_0)), \mathcal{C}(u_0))=f(\mathcal{C}(u_0))=u_0\]
\end{proof}

The following corollary is an immediate application of the above theorem:
\begin{corollary}
Let $\mathcal{A}$ be a bounded, convex, closed subset of a Banach space $\mathbb{Y}$ and $\mathcal{C}:\mathcal{A}\to \upzeta$ be a compact mapping, where $(\upzeta, \rho)$ is a complete metric space. If a mapping $\mathcal{T}:\mathcal{A}\times \overline{\mathcal{C}(\mathcal{A})}\to \mathcal{A}$ is continuous and satisfies Kannan type equicontraction condition then $\mathcal{T}(u, \mathcal{C}(u))=u$ has a solution in $\mathcal{A}.$
\end{corollary}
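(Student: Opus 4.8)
The plan is to observe first that this statement is a direct strengthening of Theorem \ref{th2.4}: its hypotheses are precisely those of Theorem \ref{th2.4} together with the additional assumption that $\mathcal{T}$ is continuous, while its conclusion is identical. Consequently the corollary follows immediately by invoking Theorem \ref{th2.4}. Since, however, continuity of $\mathcal{T}$ is now available, I would recast the argument in a self-contained form that places the Hausdorff measure of noncompactness and Sadovskii's theorem at its centre, as follows.

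Exactly as in the proof of Theorem \ref{th2.4}, for each fixed $v\in\overline{\mathcal{C}(\mathcal{A})}$ the map $\mathcal{T}(\cdot,v):\mathcal{A}\to\mathcal{A}$ is a Kannan contraction on the complete set $\mathcal{A}$, hence has a unique fixed point, which I denote $f(v)$; this yields a selection $f:\overline{\mathcal{C}(\mathcal{A})}\to\mathcal{A}$ with $f(v)=\mathcal{T}(f(v),v)$. The first step would be to prove that $f$ is continuous, and here the continuity of $\mathcal{T}$ makes the computation routine. For $v_n\to v$ I would estimate
\[
\|f(v_n)-f(v)\|\leq\big\|\mathcal{T}(f(v_n),v_n)-\mathcal{T}(f(v),v_n)\big\|+\big\|\mathcal{T}(f(v),v_n)-\mathcal{T}(f(v),v)\big\|.
\]
Applying the Kannan type equicontraction inequality in the second slot $v_n$ and using $\mathcal{T}(f(v_n),v_n)=f(v_n)$ collapses the first term to at most $k\|\mathcal{T}(f(v),v_n)-\mathcal{T}(f(v),v)\|$, so that
\[
\|f(v_n)-f(v)\|\leq(1+k)\big\|\mathcal{T}(f(v),v_n)-\mathcal{T}(f(v),v)\big\|\longrightarrow 0,
\]
the limit being exactly the continuity of $\mathcal{T}$ in its second variable. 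Thus $f$ is continuous.

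Next I would apply Sadovskii's theorem to $g:=f\circ\mathcal{C}:\mathcal{A}\to\mathcal{A}$. Because $\mathcal{C}$ is compact and $\mathcal{A}$ is bounded, $\overline{\mathcal{C}(\mathcal{A})}$ is compact; continuity of $f$ then makes $f\big(\overline{\mathcal{C}(\mathcal{A})}\big)$ compact, so $g(\mathcal{A})$ is relatively compact and $g$ is a continuous compact map. Hence for every nonempty $\mathcal{D}\subseteq\mathcal{A}$ with $\beta(\mathcal{D})>0$ the image $g(\mathcal{D})$ is totally bounded, whence $\beta(g(\mathcal{D}))=0<\beta(\mathcal{D})$ by the property that totally bounded sets have vanishing $\beta$. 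Sadovskii's theorem then supplies $u_0\in\mathcal{A}$ with $g(u_0)=u_0$, i.e. $f(\mathcal{C}(u_0))=u_0$, and therefore $\mathcal{T}(u_0,\mathcal{C}(u_0))=\mathcal{T}(f(\mathcal{C}(u_0)),\mathcal{C}(u_0))=f(\mathcal{C}(u_0))=u_0$, the required solution.

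The only genuinely non-formal point is the continuity of the selection $f$, and this is precisely the step that Theorem \ref{th2.4} had to establish without any continuity hypothesis on $\mathcal{T}$; it is exactly the continuity assumption added here that renders it immediate. The remaining ingredients---compactness of $g$, the vanishing of $\beta$ on relatively compact sets, and Sadovskii's theorem---are standard, so I expect no substantial obstacle beyond this.
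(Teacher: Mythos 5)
Your proposal is correct, and its first paragraph is exactly the paper's own argument: the paper gives no separate proof of the corollary, stating only that it is an immediate application of Theorem \ref{th2.4}, whose hypotheses are weaker (no continuity of $\mathcal{T}$) and whose conclusion is identical. Your self-contained argument, however, is a genuinely different route from the paper's proof of Theorem \ref{th2.4}, and it buys something real. The paper has to establish continuity of the selection $f$ \emph{without} assuming continuity of $\mathcal{T}$, and does so via the claim that each $\mathcal{T}(\cdot,v)$ is continuous at its own fixed point and that this transfers to continuity of $f$ --- a delicate step, arguably the weakest point of that proof. Your estimate $\|f(v_n)-f(v)\|\leq(1+k)\big\|\mathcal{T}(f(v),v_n)-\mathcal{T}(f(v),v)\big\|$, which uses the Kannan inequality only to annihilate the first-slot increment (since $\mathcal{T}(f(v_n),v_n)=f(v_n)$ makes one displacement term vanish) and then invokes continuity of $\mathcal{T}$ in the second variable, renders the continuity of $f$ rigorous in essentially one line; so your proof of the corollary does not inherit any fragility from the proof of Theorem \ref{th2.4}. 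The remaining difference is one of tools: the paper closes by applying Schauder's theorem to $f\circ\mathcal{C}$ on the compact convex set $\overline{Co(f(\mathcal{C}(\mathcal{A})))}$, whereas you invoke Sadovskii's theorem with the Hausdorff measure $\beta$, using that $f\circ\mathcal{C}$ has relatively compact range so that $\beta((f\circ\mathcal{C})(\mathcal{D}))=0<\beta(\mathcal{D})$ whenever $\beta(\mathcal{D})>0$. Both are valid; Schauder is the lighter instrument here, while your choice of Sadovskii is consistent with the machinery the paper deploys later in Theorem \ref{th2.8}.
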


\begin{example}
Consider $\mathbb{U}:=\mathrm{Span}\{e_1\},$ which is an one dimensional subspace of the Banach space, $l_p$ where $1\leq p\leq \infty$. Define 
\[B_{\mathbb{U}}:=\{te_1:t\in [0,1]\}.\]
Note that $B_{\mathbb{U}}$ is a bounded, convex, closed subset of $l_p.$
Let $\mathcal{C}:B_{\mathbb{U}}\to l_p$ be the inclusion mapping.
Let us define a mapping $\mathcal{T}:B_{\mathbb{U}}\times\mathcal{C}(B_{\mathbb{U}})\to l_p$ such that 
\[\mathcal{T}(x, y)=\begin{cases} \dfrac{1}{16}\pi_1(x+y)e_1, & (x, y)\in (B_{\mathbb{U}}\times B_{\mathbb{U}})\setminus \{(e_1, e_1)\}\\
\dfrac{1}{18}e_1, & (x, y)=(e_1, e_1)
\end{cases}\]
For $x_n=y_n=\left(1+\frac{1}{n}\right)e_1$ and tending $n$ to infinity we get, 
\[(x_n, y_n)\to (e_1, e_1)~ \mathrm{whereas}~ \mathcal{T}(x_n, y_n)=\frac{1}{16}\pi_1(x_n+y_n)e_1\to \frac{1}{8}e_1\ne \mathcal{T}(e_1, e_1).\]
So, $\mathcal{T}$ is not continuous at $(e_1, e_1).$ Let us take two elements $u=t_ue_1$ and $y=t_ye_1$ from $B_{\mathbb{U}}\setminus \{e_1\}$. Now,
\begin{align*}
\big\|\mathcal{T}(u, y)-u\big\| & = \left\|\dfrac{1}{16}\pi_1(u+y)e_1-u\right\|\\
& =\left\|\dfrac{1}{16}\pi_1(u+y)e_1-ue_1\right\|\\
& =\left|\dfrac{t_u+t_y}{16}-t_u\right|\\
& =\left|\dfrac{t_y-15t_u}{16}\right|. 
\end{align*}
Similarly for $v=t_ve_1\in B_{\mathbb{U}}\setminus \{e_1\}$, 
\[\big\|\mathcal{T}(v, y)-v\big\|=\left|\dfrac{t_y-15t_v}{16}\right|.\]
Adding we get, 
\[\big\|\mathcal{T}(u, y)-u\big\|+\big\|\mathcal{T}(v, y)-v\big\|\geq\left|\dfrac{15t_u-15t_v}{16}\right|.\]
And $\big\|\mathcal{T}(u, y)-\mathcal{T}(v, y)\big\|=\left|\dfrac{t_u-t_v}{16}\right|.$ Which implies that 
\[\big\|\mathcal{T}(u, y)-\mathcal{T}(v, y)\big\|\leq\frac{1}{14}\Big(\big\|\mathcal{T}(u, y)-u\big\|+\big\|\mathcal{T}(v, y)-v\big\|\Big).\]
Now, if $(u, y)\in (B_{\mathbb{U}}\times B_{\mathbb{U}})\setminus \{(e_1, e_1)\}$ and $(v, y)=(e_1, e_1)$ we then have
\[\big\|\mathcal{T}(u, y)-\mathcal{T}(v, y)\big\|=\left\|\frac{1}{16}\pi_1(u+y)e_1-\frac{1}{18}e_1\right\|=\left|\frac{t_u+t_y}{16}-\frac{1}{18}\right|~\mathrm{and}\]
$\big\|\mathcal{T}(u, y)-u\big\|=\left\|\dfrac{1}{16}\pi_1(u+y)e_1-u\right\|=\left|\dfrac{t_y-15t_u}{16}\right|, \big\|\mathcal{T}(v, y)-v\big\|=\dfrac{17}{18}.$
It is clear that $t_u, t_v\in [-1, 1]$ and then 
\begin{align*}
\big\|\mathcal{T}(u, y)-\mathcal{T}(v, y)\big\|\leq\sup\big\|\mathcal{T}(u, y)-\mathcal{T}(v, y)\big\| & =\sup\left|\frac{t_u+t_y}{16}-\frac{1}{18}\right|\\ 
& \leq\frac{1}{7}\inf\left(\left|\dfrac{t_y-15t_u}{16}\right|+\frac{17}{18}\right)\\
& =\frac{1}{7}\inf\Big(\big\|\mathcal{T}(u, y)-u\big\|+\big\|\mathcal{T}(v, y)-v\big\|\Big)\\
& \leq\frac{1}{7}\Big(\big\|\mathcal{T}(u, y)-u\big\|+\big\|\mathcal{T}(v, y)-v\big\|\Big).
\end{align*}
So, for any $u, v\in B_{\mathbb{U}},$
\[\big\|\mathcal{T}(u, y)-\mathcal{T}(v, y)\big\|\leq\frac{1}{7}\Big(\big\|\mathcal{T}(u, y)-u\big\|+\big\|\mathcal{T}(v, y)-v\big\|\Big).\]
As a result, we can conclude from the Theorem \ref{th2.4} that $\mathcal{T}(u, \mathcal{C}(u))=u$ has a solution in $B_{\mathbb{U}}$.
\end{example}

We next introduce the definition of $m$-$th~ invariant$ mapping which is important in due course of our events.

\begin{definition}
Let $(\upzeta, \rho)$ be a metric space and $\mathcal{A}(\ne\emptyset)\subseteq\upzeta$. A mapping $\mathcal{T}:\mathcal{A} \to \upzeta$ is said to be $m$-$th~invariant$ mapping if for any fix $q\in [0, \frac{1}{2}), \mathcal{T}(B_i)\subseteq q^mB_i$, for any bounded set $B_i\subseteq \mathcal{A}$.
\end{definition}

\medskip

We now present an extended version of Krasnosel'ski\u{i} fixed point Theorem for Kannan type equicontraction of $1st~invariant$ along with a compact mapping by utilizing Sadovskii's theorem in measure of noncompactness.
		
\begin{theorem}\label{th2.8}
Let $\mathcal{A}$ be a bounded, convex, closed subset of an infinite dimensional Banach space $\mathbb{Y}$ and $\mathcal{C}:\mathcal{A}\to\upzeta$ be a compact mapping, where $(\upzeta, \rho)$ is a complete metric space. If a Kannan type equicontraction mapping  $\mathcal{T}:\mathcal{A}\times\mathcal{C}(\mathcal{A})\to \mathbb{Y}$ satisfies the following conditions:  
\begin{itemize}
\item[a)] The mapping $\eta:\mathcal{A}\to \mathcal{A}$ such that $\eta(u)=\mathcal{T}(u, \mathcal{C}(u))$ is continuous and $1st~invariant$ mapping,
\item[b)] The family, say $\mathcal{S}_v=\big\{v\mapsto \mathcal{T}(u, v):u\in\mathcal{A}\big\}$ is equicontinuous uniformly,
\end{itemize}
then the equation $\mathcal{T}(u, \mathcal{C}(u))=u$ has a solution in $\mathcal{A}.$ 
\end{theorem}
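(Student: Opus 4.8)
The plan is to convert the implicit equation into a genuine fixed-point problem for the single-variable map $\eta\colon\mathcal{A}\to\mathcal{A}$, $\eta(u)=\mathcal{T}(u,\mathcal{C}(u))$, and then to apply Sadovskii's theorem on the nonempty closed convex bounded set $\mathcal{A}$, taking for the regular measure $\mu$ the Hausdorff measure of noncompactness $\beta$. Since a fixed point $u_0=\eta(u_0)$ is precisely a solution of $\mathcal{T}(u_0,\mathcal{C}(u_0))=u_0$, the whole problem reduces to verifying the two hypotheses of Sadovskii's theorem for $\eta$: that $\eta$ is a continuous self-map of $\mathcal{A}$, and that $\beta(\eta(B))<\beta(B)$ for every $B\subseteq\mathcal{A}$ with $\beta(B)>0$.

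The continuity and self-map requirement is granted directly by hypothesis (a); the uniform equicontinuity of the family $\mathcal{S}_v$ in hypothesis (b), together with the continuity of the compact map $\mathcal{C}$ and the Kannan type estimate inside each slice $\mathcal{T}(\cdot,v)$, is what underpins this continuity claim. For the measure-of-noncompactness requirement, fix any $B\subseteq\mathcal{A}$ with $\beta(B)>0$. Because $\mathcal{A}$ is bounded, $B$ is a bounded subset of $\mathcal{A}$, so the $1st~invariant$ clause of hypothesis (a) furnishes a constant $q\in[0,\tfrac12)$ with $\eta(B)\subseteq qB$. The monotonicity (Property~(b)) and positive homogeneity (Property~(e)) of $\beta$ then give
\[
\beta(\eta(B))\le\beta(qB)=q\,\beta(B)<\beta(B),
\]
the strict inequality holding because $0\le q<\tfrac12<1$ while $\beta(B)>0$. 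Sadovskii's theorem now produces a fixed point $u_0\in\mathcal{A}$ of $\eta$, i.e.\ a solution of $\mathcal{T}(u_0,\mathcal{C}(u_0))=u_0$.

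The step I expect to carry the real weight is the measure-of-noncompactness inequality, and more precisely its reduction to the $1st~invariant$ property: once $\eta(B)\subseteq qB$ is in hand the estimate is a two-line consequence of the properties of $\beta$, but it is exactly here that the infinite-dimensionality of $\mathbb{Y}$ becomes indispensable. In finite dimensions $\mathcal{A}$ would already be compact and the conclusion would follow from Schauder's theorem as in Theorem~\ref{th2.4}; in the infinite-dimensional setting $\mathcal{A}$ is noncompact, $\beta$ need not vanish, and the strict contraction of $\beta$ under $\eta$ is what replaces compactness and makes Sadovskii's theorem applicable. A secondary point requiring care is the continuity of $\eta$ asserted in (a): since the Kannan inequality controls $\|\mathcal{T}(u_1,v)-\mathcal{T}(u_2,v)\|$ only through the displacement terms $\|\mathcal{T}(u_i,v)-u_i\|$ rather than through $\|u_1-u_2\|$ directly, this continuity is not automatic, and it is precisely the uniform equicontinuity in the second variable (hypothesis (b)) that must be combined with the slicewise Kannan control to secure it.
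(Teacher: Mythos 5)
Your proposal is correct under the paper's stated definitions, and it shares the paper's overall scaffolding (reduce everything to a fixed point of $\eta$ and apply Sadovskii's theorem with the Hausdorff measure $\beta$ on $\mathcal{A}$), but your verification of the condensing condition $\beta(\eta(B))<\beta(B)$ is genuinely different from, and far shorter than, the paper's. The paper never applies the $1st~invariant$ property to the set $G$ itself; it runs a covering argument instead: cover $G$ by finitely many balls $B(u_i,\mathcal{R})$ with $\mathcal{R}\le\beta(G)+\varepsilon$, cover $\mathcal{C}(G)$ by finitely many balls $B(v_j,\delta)$ using the compactness of $\mathcal{C}$, apply invariance only to the covering balls, $\eta(B(u_i,\mathcal{R}))\subseteq qB(u_i,\mathcal{R})$, and then, writing $h=\eta(g)$ with $g\in B(u_k,\mathcal{R})\cap\mathcal{C}^{-1}(B(v_l,\delta))$, split $\mathcal{T}(g,\mathcal{C}(g))-\mathcal{T}(u_k,v_l)$ through the intermediate point $\mathcal{T}(u_k,\mathcal{C}(g))$, controlling the first difference by the Kannan inequality (constant $\lambda$) and the second by the uniform equicontinuity of $\mathcal{S}_v$ (hypothesis (b)); this yields $\|h-\mathcal{T}(u_k,v_l)\|<2q\lambda\beta(G)+(2q\lambda+1)\varepsilon$, so $\eta(G)$ is covered by balls of radius $\mathcal{R}_1=2q\lambda\beta(G)<\beta(G)$ centred at the finitely many points $\mathcal{T}(u_i,v_j)$, whence $\beta(\eta(G))<\beta(G)$. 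Your route instead reads the definition of $1st~invariant$ literally, namely that $\eta(B_i)\subseteq qB_i$ holds for \emph{every} bounded $B_i\subseteq\mathcal{A}$, applies it to $B$ itself, and finishes with monotonicity and positive homogeneity of $\beta$. That is legitimate as the definition is written, and it buys a two-line proof; it also exposes that under this literal reading hypothesis (a) alone does all the work, so the Kannan equicontraction, the compactness of $\mathcal{C}$, and hypothesis (b) never enter your argument at all. The paper's covering argument, by contrast, only ever uses invariance on balls, so it would survive under the weaker (and more natural) reading in which $\eta(B_i)\subseteq qB_i$ is required only for balls of a cover; your direct application would not be available there. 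One small inaccuracy in your commentary: the continuity of $\eta$ is not something that must be ``secured'' by combining hypothesis (b) with the slicewise Kannan control --- it is assumed outright in hypothesis (a) (and the paper's Note emphasizes that this assumption is what makes Sadovskii applicable), so both you and the paper simply use it as given.
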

		
\begin{proof}
Let us assume that $G$ be a bounded subset of $\mathcal{A}$ with $\beta (G)>0.$  As $\mathcal{C}(G)$ is relatively compact, we can find some $v_1, v_2, \dots, v_p$ such that
\begin{equation}
\mathcal{C}(G)\subseteq\bigcup\limits_{j=1}^pB(v_j,\delta). \label{1}	
\end{equation}
We can consider a finite collection $\{B(u_i, \mathcal{R})\}, i=1, 2, \dots, n$ such that \[G\subseteq\bigcup\limits_{j=1}^nB(u_i, \mathcal{R})\] 
with $\mathcal{R}>0$ and $\mathcal{R}\leq \beta(G)+\varepsilon$ follows from the definition of Hausdorff measure of noncompactness.	   
Now, given that $\eta$ is $m$-$th~invariant$ mapping, so for some $q\in [0, \frac{1}{2})$ we have, 
\[\eta(B(u_i, \mathcal{R}))\subseteq qB(u_i, \mathcal{R})\] 
i.e, 
\[\mathcal{T}\big(B(u_i, \mathcal{R})\times \mathcal{C}(B(u_i, \mathcal{R}))\big)\subseteq qB(u_i, \mathcal{R}),\]
for any $B(u_i, \mathcal{R})\subseteq \mathcal{A}.$ As the family, $\mathcal{S}_v$ is equicontinuous uniformly then corresponding to arbitrarily chosen $\varepsilon>0$ there exists $\delta>0$ such that for all $u\in\mathcal{A}$ and $v_1, v_2\in \mathcal{C}(\mathcal{A}),$
\begin{equation}
\big\|\mathcal{T}(u, v_1)-\mathcal{T}(u, v_2)\big\|\leq\varepsilon, \quad \mathrm{whenever} \quad d(v_1, v_2)<\delta\label{2}
\end{equation}
holds for any member of $\mathcal{S}_v$.\\
Apparently, 
\begin{equation}
G\subseteq\bigcup\limits_{j=1}^p\bigcup\limits_{i=1}^n\left(B(u_i, \mathcal{R}) \cap \mathcal{C}^{-1}(B(v_j, \delta))\right). \label{eq2.3}
\end{equation}
Now, we are going to prove $\beta (\eta(G))<\beta (G).$ Let us assume that $h\in \eta(G)$ so, $h=\eta(g)=\mathcal{T}(g, \mathcal{C}(g))$ for some $g\in G$. By (\ref{eq2.3}) we can assume two natural numbers $k~(1\leq k\leq n)$ and $l~(1\leq l\leq p)$ such that 
\[g\in B(u_k, \mathcal{R}) ~ \cap ~  \mathcal{C}^{-1}(B(v_l, \delta)).\]
Here, $g\in B(u_k, \mathcal{R})$ implies that $\|g-u_k\|<\mathcal{R}$. Also, $d(\mathcal{C}(g), v_l)<\delta.$ Then for some $\lambda\in [0, \frac{1}{2}),$
\begin{align*}
\big\|h-\mathcal{T}(u_k, v_l)\big\| & =\big\|\mathcal{T}(g, \mathcal{C}(g))-\mathcal{T}(u_k, v_l)\big\|\\
& \leq \big\|\mathcal{T}(g, \mathcal{C}(g))-\mathcal{T}(u_k, \mathcal{C}(g))\big\|+\big\|\mathcal{T}(u_k, \mathcal{C}(g))-\mathcal{T}(u_k, v_l)\big\|\\
& \leq \lambda\Big(\big\|\mathcal{T}(g, \mathcal{C}(g))-g\big\|+\big\|\mathcal{T}(u_k, \mathcal{C}(g))-u_k\big\|\Big)+\varepsilon. 
\end{align*}
Now from the relationship $\mathcal{T}\big(B(u_k, \mathcal{R})\times \mathcal{C}(B(u_k, \mathcal{R}))\big)\subseteq qB(u_k, \mathcal{R})$ we get 
\[\big\|h-\mathcal{T}(u_k, v_l)\big\|< 2q\lambda\mathcal{R}+\varepsilon\leq 2q\lambda(\beta(G)+\varepsilon)+\varepsilon=2q\lambda\beta(G)+(2q\lambda+1)\varepsilon\]
Since $\varepsilon$ is chosen arbitrarily, we then have 
\[\big\|h-\mathcal{T}(u_k, v_l)\big\|<2q\lambda\beta(G)<\beta(G).\]
Letting $\mathcal{R}_1=2q\lambda\beta(G), ~ h\in B(\mathcal{T}(u_k, v_l), \mathcal{R}_1).$ Therefore,
\[\eta(G)\subseteq\bigcup\limits_{i=1}^n\left(\bigcup\limits_{j=1}^pB(\mathcal{T}\left(u_i, v_j\right), \mathcal{R}_1)\right).\]  
Now, the definition of Hausdorff measure of noncompactness leads us to $\beta(\eta(G))<\beta(G)$.
\end{proof}

\begin{note}
It is clear that for finite dimensional Banach space we can conclude the above theorem using Darbo's theorem in measure of noncompactness by ignoring the $1st~invariant$ condition of the mapping $\eta$ along with the last condition, $(b)$.   
\end{note}

\begin{proof}
For a finite dimensional Banach space $\mathbb{Y}$ the bounded, convex, closed subset, $\mathcal{A}$ of $\mathbb{Y}$ is compact and hence totally bounded. So, any bounded subset, $G$ of $\mathcal{A}$ is also totally bounded. Hence the Kuratowski measure of noncompactness of $G$, $\alpha(G)=0.$ Now,
\begin{align*}
\alpha(G)=0 & \implies\overline{G} ~ is ~ compact\\
& \implies\eta(\overline{G}) ~ is ~ compact~ (as,~\eta~is~continuous~in~the~above~theorem)\\
& \implies\alpha(\eta(\overline{G}))=0\\
& \implies\alpha(\eta(G))=0~(as,~\eta(G)\subseteq\eta(\overline{G}))
\end{align*}
Hence there always exists $k_1>0$ such that $\alpha(\eta(G))\leq k_1\alpha(G).$ Consequently, Darbo's theorem conclude that the equation $\mathcal{T}(u, \mathcal{C}(u))=u$ has a solution in $\mathcal{A}.$
\end{proof}

\begin{note}
To apply Sadovskii's theorem in measure of noncompactness it is necessary to consider the continuity condition of the mapping $\eta$ in Theorem \ref{th2.8}.  
\end{note}
	
\begin{remark}
\begin{itemize}
\item[(i)] The continuity of the mapping $\eta$, as mentioned above does not ensure the continuity of the mapping $\mathcal{T}.$ Indeed, if we consider the mapping $\mathcal{T}$ as in Example \ref{2.6} and the compact mapping as $\mathcal{C}(u)=\frac{u}{4},$ then the mapping $\eta$ is continuous. However, for the inclusion map $\mathcal{C}$, $\mathcal{T}$ as well as $\eta$ fails to be continuous in its domain of definition. In particular, we are dealing with the continuity of $\mathcal{T}$ only on the set $\{(u, \mathcal{C}(u)):u\in\mathcal{A}\}.$
\item[(ii)] The above Theorem \ref{th2.8} can also be concluded by using Darbo's theorem.
\end{itemize}
\end{remark}

\section{Application to Initial Value Problems}

In this section, we furnish a class of initial value problems that can be efficiently solved by applying the results obtained in the preceding section. 

\subsection{Application I} Let us consider an initial value problem
\[\frac{d^2u}{dt^2}+\omega^2u=f(t, u(t))\int_{0}^tG(t, s)u(s)ds,\]
\[u(0)=a, u'(0)=b\]
where $\omega\neq 0, t\in I=[0,1],$ also,
\[G(t, s)=\dfrac{1}{\omega}\sin(\omega(t-s))H(t-s), ~\omega\neq 0\]
and $H(t-s)$ is heaviside step function. Let $u(t)\in \mathbb{Y}$, the vector space of all real valued continuous functions over $I$ equipped with the norm:
\[\|u(t)\|_{\mathbb{Y}}=\sup_{t\in I}\left|u(t)\right|e^{-\gamma t},\quad \gamma\in\mathbb{R}.\]
Note that $(\mathbb{Y}, \|.\|_{\mathbb{Y}})$ forms a Banach space.

\medskip

The following theorem illustrates that the mentioned initial value problem has a solution in the closed unit ball $\mathcal{B}(0,1)$.
	
\begin{theorem}
Let $f:I\times\mathbb{R}\to \mathbb{R}$ be a nonzero function. There exists $M>0$ such that $\big|f(t,u)\big|\leq M,$ for all $t\in I$ and $u\in \mathbb{R}.$ The above initial value problem has a solution in closed unit ball $\mathcal{B}(0, 1)$ if the following conditions are satisfied
\begin{itemize}
\item[(i)]
$\big|f(s, u(s))-f(s, v(s))\big|\leq D(t, u(s), v(s))$ ~ where, 
\begin{align*}
D(t, u(s), v(s))= & \left|u_1(s)-a\cos(\omega t)-b\sin(\omega t)-\int_{0}^tG(t, s)f(s, u_1(s))v(s)ds\right|+\\ & \left|u_2(s)-a\cos(\omega t)-b\sin(\omega t)-\int_{0}^tG(t, s)f(s, u_2(s))v(s)ds\right|.
\end{align*}
\item[(ii)] $M\leq |\omega|^2\left(1-|a|-|b|\right)$.
\item[(iii)] $|\gamma +2\omega|<\frac{|\omega(\gamma^2+\omega^2)|}{2}$.
\end{itemize}	  
\end{theorem}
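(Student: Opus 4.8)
The plan is to recast the problem as a fixed-point equation of the form $\mathcal{T}(u,\mathcal{C}(u))=u$ and then apply Theorem \ref{th2.4}. Since $G(t,s)=\frac1\omega\sin(\omega(t-s))H(t-s)$ is the causal Green's function of $\frac{d^2}{dt^2}+\omega^2$, a function $u\in\mathbb{Y}$ solves the initial value problem if and only if
\[u(t)=a\cos(\omega t)+b\sin(\omega t)+\int_0^tG(t,s)f(s,u(s))\Big(\int_0^sG(s,\tau)u(\tau)\,d\tau\Big)ds.\]
Accordingly I would set $\mathcal{C}(u)(t)=\int_0^tG(t,s)u(s)\,ds$ and $\mathcal{T}(u,v)(t)=a\cos(\omega t)+b\sin(\omega t)+\int_0^tG(t,s)f(s,u(s))v(s)\,ds$ on $\mathcal{A}=\mathcal{B}(0,1)$, so that a fixed point of $u\mapsto\mathcal{T}(u,\mathcal{C}(u))$ is exactly a solution of the IVP.

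Two routine verifications come first. To see that $\mathcal{C}$ is compact, note that for $u$ in the bounded ball the images $\mathcal{C}(u)$ are uniformly bounded and, because $|\mathcal{C}(u)(t_1)-\mathcal{C}(u)(t_2)|$ is controlled by $|t_1-t_2|$ uniformly in $u$ (via $|G|\le\frac1{|\omega|}$ on $I$), equicontinuous; the Arzel\`a--Ascoli theorem then makes $\overline{\mathcal{C}(\mathcal{B}(0,1))}$ compact, and continuity of $\mathcal{C}$ is clear. To see that $\mathcal{T}$ maps $\mathcal{B}(0,1)\times\overline{\mathcal{C}(\mathcal{B}(0,1))}$ into $\mathcal{B}(0,1)$, I would bound $|a\cos\omega t+b\sin\omega t|\le|a|+|b|$ and estimate the forcing integral by $\frac{M}{|\omega|^2}$, using $|f|\le M$ together with the two factors $\frac1{|\omega|}$ coming respectively from $\mathcal{C}$ and from $\mathcal{T}$; condition (ii), $M\le|\omega|^2(1-|a|-|b|)$, is precisely what yields $\|\mathcal{T}(u,v)\|_{\mathbb{Y}}\le1$.

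The heart of the argument, and the step I expect to be the main obstacle, is the Kannan type equicontraction estimate. Writing
\[\mathcal{T}(u_1,v)(t)-\mathcal{T}(u_2,v)(t)=\int_0^tG(t,s)\big[f(s,u_1(s))-f(s,u_2(s))\big]v(s)\,ds\]
and feeding in hypothesis (i), which dominates $|f(s,u_1(s))-f(s,u_2(s))|$ by the sum of the residuals $|\mathcal{T}(u_1,v)-u_1|$ and $|\mathcal{T}(u_2,v)-u_2|$, I would pull the residuals out through the weighted norm using $|\mathcal{T}(u_i,v)(s)-u_i(s)|\le\|\mathcal{T}(u_i,v)-u_i\|_{\mathbb{Y}}\,e^{\gamma s}$. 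This reduces the whole estimate to the single scalar $\sup_{t\in I}e^{-\gamma t}\int_0^tG(t,s)e^{\gamma s}\,ds$. Evaluating the inner integral explicitly gives $\dfrac{\omega-e^{-\gamma t}(\gamma\sin\omega t+\omega\cos\omega t)}{\omega(\gamma^2+\omega^2)}$, whose modulus is at most $\dfrac{|\gamma+2\omega|}{|\omega(\gamma^2+\omega^2)|}$; condition (iii) says exactly that this number is smaller than $\frac12$, so it serves as a Kannan constant $k\in[0,\frac12)$. The delicate points are the bookkeeping with the weight $e^{-\gamma t}$ and the extra $v$-factor, and confirming that the explicit evaluation really produces the constant appearing in (iii).

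Once these are in place, $\mathcal{A}=\mathcal{B}(0,1)$ is closed, convex and bounded, $\mathcal{C}$ is compact, and $\mathcal{T}:\mathcal{A}\times\overline{\mathcal{C}(\mathcal{A})}\to\mathcal{A}$ is a Kannan type equicontraction, so Theorem \ref{th2.4} supplies some $u_0\in\mathcal{B}(0,1)$ with $\mathcal{T}(u_0,\mathcal{C}(u_0))=u_0$. Unwinding the Green's function reformulation, $u_0$ is the desired solution of the initial value problem in the closed unit ball $\mathcal{B}(0,1)$.
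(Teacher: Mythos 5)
Your proposal is correct and takes essentially the same route as the paper's own proof: the same operators $\mathcal{C}(u)(t)=\int_0^tG(t,s)u(s)\,ds$ and $\mathcal{T}(u,v)(t)=a\cos(\omega t)+b\sin(\omega t)+\int_0^tG(t,s)f(s,u(s))v(s)\,ds$, the same use of condition (ii) to show $\mathcal{T}$ maps into $\mathcal{B}(0,1)$, the same reduction of the Kannan estimate via condition (i) and the weighted norm to the scalar $\sup_{t\in I}\bigl|\int_0^t\frac{1}{\omega}\sin(\omega r)e^{-\gamma r}\,dr\bigr|$, bounded using condition (iii) by a constant $k<\frac{1}{2}$, followed by the same appeal to Theorem \ref{th2.4}. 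The only cosmetic difference is that you spell out the Arzel\`a--Ascoli argument for compactness of $\mathcal{C}$, which the paper leaves as an easy check.
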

	
\begin{proof}	
Consider an operator $\mathcal{C}:\mathbb{Y}\to \mathbb{Y}$ such that 
\[\mathcal{C}(u(t))=\int_{0}^tG(t, s)u(s)ds, \quad u\in \mathbb{Y}\]
where $G(t, s)=\frac{1}{\omega}\sin(\omega(t-s))H(t-s)=\begin{cases}
     0, & 0\leq t<s \\
     \frac{1}{\omega}\sin(\omega(t-s)), & s<t<\infty
\end{cases}, ~\omega\neq 0.$\\
Now, for each $u(t)\in \mathcal{B}(0, 1),$ \[\big|\mathcal{C}(u(t))\big|=\left|\int_{0}^tG(t, s)u(s)ds\right|\leq \frac{1}{|\omega|}.\]

Then $\mathcal{C}$ maps $\mathcal{B}(0, 1)$ to $\mathcal{B}\left(0, \dfrac{1}{|\omega|}\right).$ Thus, we can easily check that $\mathcal{C}$ is a compact mapping. Consider the mapping 
\[\mathcal{T}:\mathcal{B}(0, 1)\times\mathcal{C}(\mathcal{B}(0, 1))\to \mathcal{B}(0, 1),\]
defined by
\[\mathcal{T}(u(t), v(t))=a\cos(\omega t)+b\sin(\omega t)+\int_{0}^tG(t, s)f(s, u(s))v(s)ds.\]
Now,
\begin{align*}
D(t, u(s), v(s))e^{-\gamma t} & \leq \sup_{s\in I}~\Big\{\big|u_1(s)-\mathcal{T}(u_1(t), v(t))\big|+\big|u_2(s)-\mathcal{T}(u_2(t), v(t))\big|\Big\}e^{-\gamma t}\\
& =\Big\{\big\|u_1(t)-\mathcal{T}(u_1(t), v(t))\big\|+\big\|u_2(t)-\mathcal{T}(u_2(t), v(t))\big\|\Big\}e^{-\gamma (t-s)}
\tag{3.1}\label{eq3.1}
\end{align*}
and,
\begin{align*}
\big\|\mathcal{T}(u_1(t), v(t))-\mathcal{T}(u_2(t), v(t))\big\| & =\sup_{t\in I}\big|\mathcal{T}(u_1(t), v(t))-\mathcal{T}(u_2(t), v(t))\big|e^{-\gamma t}\\ & =\sup_{t\in I}\left|\int_{0}^tG(t, s)[f(s, u_1(s))-f(s, u_2(s))]v(s)ds\right|e^{-\gamma t}\\ & \leq \sup_{t\in I}\left|\int_{0}^tG(t, s)D(t, u(s), v(s))ds\right|e^{-\gamma t},
\end{align*} then from given condition $(i)$ and \eqref{eq3.1}, we have
\[\leq \Big\{\big\|u_1(t)-\mathcal{T}(u_1(t), v(t))\big\|+\big\|u_2(t)-\mathcal{T}(u_2(t), v(t))\big\|\Big\}\sup_{t\in I}\left|\int_{0}^tG(t, s)e^{-\gamma (t-s)}ds\right|\]


\begin{align*}
& =\Big\{\big\|u_1(t)-\mathcal{T}(u_1(t), v(t))\big\|+\big\|u_2(t)-\mathcal{T}(u_2(t), v(t))\big\|\Big\}\sup_{t\in I}\left|\int_{0}^t\frac{1}{\omega}\sin(\omega(t-s))e^{-\gamma (t-s)}ds\right|\\
& =\Big\{\big\|u_1(t)-\mathcal{T}(u_1(t), v(t))\big\|+\big\|u_2(t)-\mathcal{T}(u_2(t), v(t))\big\|\Big\}\sup_{t\in I}\left|\int_{0}^t\frac{1}{\omega}\sin(\omega r)e^{-\gamma r}dr\right|,\\
& (letting \quad r=t-s).
\tag{3.2}\label{eq3.2}
\end{align*}
Let 
\begin{align*}
L & =\int_{0}^t\frac{1}{\omega}\sin(\omega r)e^{-\gamma r}dr\\ & =\frac{1}{\omega}\int_{0}^t\sin(\omega r)e^{-\gamma r}dr\\ & =\frac{1}{\omega}\left[\frac{e^{-\gamma t}(\omega e^{\gamma t}-\gamma \sin(\omega t)-\omega \cos(\omega t))}{\gamma^2+\omega^2}\right].
\end{align*}
Now, $\sup_{t\in I}|L|=\dfrac{|\gamma +2\omega|}{|\omega|(\gamma^2+\omega^2)}.$ Let us assume that $k=\dfrac{|\gamma +2\omega|}{|\omega|(\gamma^2+\omega^2)}.$ So, $(iii)$ implies $k\in [0,\frac{1}{2})$.
On simplification, the inequality \eqref{eq3.2} produces \[\big\|\mathcal{T}(u_1(t), v(t))-\mathcal{T}(u_2(t), v(t))\big\|\leq k\Big\{\big\|u_1(t)-\mathcal{T}(u_1(t), v(t))\big\|+\big\|u_2(t)-\mathcal{T}(u_2(t), v(t))\big\|\Big\}.\] 
 Therefore, $\mathcal{T}$ satisfies Kannan type equicontraction condition. 
Let us take any $u(t)$ in $\mathcal{B}(0, 1),$ then by condition $(ii)$ \[\big\|\mathcal{T}(u(t), \mathcal{C}(u(t)))\big\|\leq |a|+|b|+\frac{M}{|\omega|^2}\leq 1.\]
It now follows from Theorem \ref{th2.4} that $\mathcal{T}(u(t), \mathcal{C}(u(t)))=u(t)$ has a solution in $\mathcal{B}(0, 1).$ Consequently, the above initial value problem has a solution in $\mathcal{B}(0, 1).$
\end{proof}

\subsection{Application II} Let us now consider another type of IVP:
\[\frac{d^2u}{dt^2}=u(t)+f(t, u(t))-\omega\left(\int_{0}^tG(t, s)\left[u(s)+f(s, u(s)\right]ds\right),\quad \omega\ne 0,\]
\[u(0)=0, u'(0)=0,\]
where each function, variable and constant satisfies the same properties as mentioned above. As an application of the theorem \ref{th2.8}, the existence of solution(s) of the immediate above problem is assured from the following theorem.
\begin{theorem}
Let $f:I\times\mathbb{R}\to \mathbb{R}$ be a nonzero function which is linear in second co-ordinate. There exists $M>0$ such that $\big|f(t,u)\big|\leq M,$ for all $t\in I$ and $u\in \mathbb{R}.$ The above initial value problem has a solution in closed unit ball $\mathcal{B}(0, 1)$ if the following conditions are satisfied
\begin{itemize}
\item[(i)]
$\big|f(s, u(s))-f(s, v(s))\big|\leq D(t, u(s), v(s))$ ~ where, 
\begin{align*}
D(t, u(s), v(s))= & \left|u_1(s)-v(s)-\int_{0}^tG(t, s)f(s, u_1(s))ds\right|+\\ & \left|u_2(s)-v(s)-\int_{0}^tG(t, s)f(s, u_2(s))ds\right|.
\end{align*}
\item[(ii)] $M\leq \frac{|\omega|}{8}-1$.
\item[(iii)] $|\gamma +2\omega|<\frac{|\omega(\gamma^2+\omega^2)|}{2}$.
\end{itemize}
\end{theorem}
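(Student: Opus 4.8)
The plan is to recast the initial value problem as a fixed-point equation for the map $\eta(u)=\mathcal{T}(u,\mathcal{C}(u))$ and then verify the hypotheses of Theorem~\ref{th2.8}. Mirroring Application~I, I would retain the same compact operator $\mathcal{C}:\mathcal{B}(0,1)\to\mathbb{Y}$ given by $\mathcal{C}(u)(t)=\int_0^tG(t,s)u(s)\,ds$, which by the bound $|G(t,s)|\le 1/|\omega|$ maps $\mathcal{B}(0,1)$ into $\mathcal{B}(0,1/|\omega|)$ and is compact, and I would define
\[\mathcal{T}(u,v)(t)=v(t)+\int_0^tG(t,s)f(s,u(s))\,ds,\]
so that
\[\eta(u)(t)=\mathcal{T}(u,\mathcal{C}(u))(t)=\int_0^tG(t,s)\bigl[u(s)+f(s,u(s))\bigr]\,ds.\]
Since $\mathcal{C}$ inverts $\tfrac{d^2}{dt^2}+\omega^2$ under the prescribed zero initial data, applying that operator to the relation $u=\eta(u)$ returns a second-order equation with the data $u(0)=u'(0)=0$; matching it against the stated problem identifies the fixed points of $\eta$ with the solutions of the IVP, so it suffices to produce one in $\mathcal{B}(0,1)$.

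The first verification is that $\mathcal{T}$ is a Kannan type equicontraction, and this is the same computation as in Application~I. Starting from hypothesis~(i), I would pass the difference of the $f$-terms through the kernel and use the substitution $r=t-s$, reducing the estimate of $\|\mathcal{T}(u_1,v)-\mathcal{T}(u_2,v)\|$ to the scalar quantity $\sup_{t\in I}\bigl|\int_0^t\tfrac1\omega\sin(\omega r)e^{-\gamma r}\,dr\bigr|=\frac{|\gamma+2\omega|}{|\omega|(\gamma^2+\omega^2)}=:k$. Condition~(iii) forces $k\in[0,\tfrac12)$, yielding $\|\mathcal{T}(u_1,v)-\mathcal{T}(u_2,v)\|\le k\bigl(\|u_1-\mathcal{T}(u_1,v)\|+\|u_2-\mathcal{T}(u_2,v)\|\bigr)$, i.e. the Kannan type equicontraction condition with constant $k$.

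The genuinely new work is checking the two extra hypotheses of Theorem~\ref{th2.8}. Condition~(b) is immediate: because $\mathcal{T}$ depends on its second argument only additively, $\mathcal{T}(u,v_1)-\mathcal{T}(u,v_2)=v_1-v_2$ for every $u$, so the family $\mathcal{S}_v$ is uniformly equicontinuous and one may simply take $\delta=\varepsilon$. For condition~(a), continuity of $\eta$ is straightforward, since $f$ is continuous (indeed linear) in its second argument and $G$ is bounded, so $\eta$ is a continuous integral operator; alternatively the same Kannan estimate used in the proof of Theorem~\ref{th2.4} applies verbatim. It remains to establish the $1st~invariant$ property: that $\eta(B_i)\subseteq qB_i$ for a fixed $q\in[0,\tfrac12)$ and every bounded $B_i\subseteq\mathcal{B}(0,1)$.

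I expect the $1st~invariant$ condition to be the main obstacle. Exploiting linearity of $f$ in the second coordinate, I would treat $\eta$ as a bounded operator and estimate $\|\eta(u)\|$ against the weighted norm via $\int_0^t|G(t,s)|\bigl(|u(s)|+M\bigr)\,ds$; the kernel bound $\int_0^t|G(t,s)|\,ds\le 1/|\omega|$ together with the smallness hypothesis~(ii), $M\le |\omega|/8-1$, is precisely what drives the resulting scaling factor $q$ below $\tfrac12$ and gives $\eta(B_i)\subseteq qB_i$. With (a) and (b) verified and $\mathbb{Y}$ infinite dimensional, Theorem~\ref{th2.8} produces a fixed point $u\in\mathcal{B}(0,1)$ of $\eta$, which by the first paragraph solves the initial value problem; everything apart from (a) and (b) simply parallels Application~I.
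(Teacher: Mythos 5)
Your proposal follows essentially the same route as the paper's own proof: the same compact operator $\mathcal{C}$ and mapping $\mathcal{T}(u,v)=v+\int_0^tG(t,s)f(s,u(s))\,ds$, the same observation that $\mathcal{T}(u,v_1)-\mathcal{T}(u,v_2)=v_1-v_2$ yields uniform equicontinuity with $\delta=\varepsilon$, the same reduction of the Kannan type equicontraction estimate to the computation of Application~I via condition~(iii), the same use of condition~(ii) to control $\|\eta(u)\|$, and the same appeal to Theorem~\ref{th2.8}. If anything, you are slightly more careful than the paper on the $1st~invariant$ point (the paper simply asserts it ``as $\eta(u)$ is a linear map,'' whereas you flag it as the main obstacle and sketch the norm estimate), but the argument and its ingredients coincide with the published proof.
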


\begin{proof}
If we take the same compact operator $\mathcal{C}:\mathbb{Y}\to \mathbb{Y}$ such that 
\[\mathcal{C}(u(t))=\int_{0}^tG(t, s)u(s)ds, \quad u\in \mathbb{Y}\]
where $G(t, s)$ has already been described.
Then $\mathcal{C}$ maps $\mathcal{B}(0, 1)$ to $\mathcal{B}\left(0, \dfrac{1}{|\omega|}\right).$ Consider the mapping $\mathcal{T}:\mathcal{B}(0, 1)\times\mathcal{B}(0,1/|\omega|)\to \mathbb{Y}$ such that
\[\mathcal{T}(u(t), v(t))=v(t)+\int_{0}^tG(t, s)f(s, u(s))ds,\quad where\quad v(t)\in\mathcal{C}(\mathcal{B}(0, 1)).\]
Take the family $\mathcal{S}_{v(t)}=\{v(t)\mapsto \mathcal{T}(u(t), v(t)):u(t)\in \mathcal{B}(0, 1)\}$ which is uniformly equicontinuous as if we consider $u(t)\in \mathcal{B}(0, 1)$ and $v_1(t), v_2(t)\in\mathcal{C}({\mathcal{B}(0, 1)}),$ corresponding to arbitrarily chosen $\varepsilon>0$ there exists $\delta>0$ such that $\|v_1(t)-v_2(t)\|<\delta.$ Now, 
\[\big\|\mathcal{T}(u(t), v_1(t))-\mathcal{T}(u(t), v_2(t))\big\|=\|v_1(t)-v_2(t)\|<\delta=\varepsilon,\]
Now, by similar calculation as previous theorem we can conclude that $\mathcal{T}$ satisfies Kannan type equicontraction condition. 
Consider $u(t)$ in $\mathcal{B}(0, 1),$ then by condition $(ii)$ 
\[\big\|\mathcal{T}(u(t), \mathcal{C}(u(t)))\big\|\leq \frac{1}{|\omega|}+\frac{M}{|\omega|}\leq \frac{1}{8}.\]
Here, The mapping $\eta:\mathcal{B}(0, 1)\to \mathcal{B}(0, 1)$ such that $\eta(u)=\mathcal{T}(u, \mathcal{C}(u))$ is clearly continuous and also it is $1st~invariant$ mapping as $\eta(u)$ is a linear map.
It now follows from Theorem \ref{th2.8} that $\mathcal{T}(u(t), \mathcal{C}(u(t)))=u(t)$ has a solution in $\mathcal{B}(0, 1).$ This assures the existence of solution(s) of the IVP in $\mathcal{B}(0, 1).$ 
\end{proof} 
In view of the obtained results, it is perhaps appropriate to end the present article with the following question:
\begin{Question}
By considering Kannan type equicontraction mapping and the idea of measure of noncompactness, can we conclude the proof of the Theorem \ref{th2.8} by eliminating the continuity condition of $\eta~\mathord{?}$
\end{Question}
\noindent \textbf{Conflicts of Interest} 

The authors declare that they have no conflicts of interest.
\vspace{0.1 cm}

\begin{Acknowledgement}
The first author is funded by University Grants Commission, Government of India. We are extremely grateful to Dr Saikat Roy and Mr Suprokash Hazra for their valuable suggestion to improve the Mathematical content as well as the writing of this project. 
\end{Acknowledgement}
\bibliographystyle{plain}

\begin{thebibliography}{10}
\bibitem{AKPRS}
R.R. Akhmerov, M.I. Kamenskii, A.S. Potapov, A.E. Rodkina, B.N. Sadovskii,
\newblock Measures of Noncompactness and Condensing Operators.
\newblock {\em Operator Theory: Advances and Applications}, vol. 55, 1992.

\bibitem{BG}
J. Bana\'{s}, K. Goebel,
\newblock Measures of noncompactness in Banach spaces.
\newblock {\em Lecture
Notes in Pure and Appl. Math. 60, Marcel Dekker, New York}, 1980.

\bibitem{BJMSV}
J. Bana\'{s}, M. Jleli, M. Mursaleen, B. Samet, C. Vetro, 
\newblock Advances in Nonlinear Analysis via the Concept of Measure of Noncompactness.
\newblock {\em Springer Nature Singapore Pte Ltd.}, 2017.

\bibitem{Bur}
T.A. Burton,
\newblock Integral equations, implicit functions, and fixed points.
\newblock{\em Proc. Amer. Math. Soc.}, 124(8), 2383-2390, 1996.

\bibitem{Dar}
G. Darbo,
\newblock Punti uniti in trasformazioni a codominio non compatto.
\newblock {\em Rend. Semin. Mat. Univ. Padova}, 24, 84-92, 1955.

\bibitem{Kara}
G.L. Karakostas, 
\newblock An extension of Krasnosel’ski\u{i}’s fixed point theorem for contractions and compact Mappings. 
\newblock{\em Topol. Methods Nonlinear Anal.}, 22, 181-191, 2003.

\bibitem{Kra}
M.A. Krasnosel’ski\u{i},
\newblock Some problems of nonlinear analysis, 
\newblock {\em American Mathematical Society Translations, Ser. 2}, 10, 345-409, 1958.

\bibitem{Pac}
M. P\u{a}curar,
\newblock Approximating common fixed points of Presi\'{c}-Kannan type operators by a multi-step iterative method.
\newblock{\em Analele Stiintifice ale Universitatii Ovidius Constanta, Seria Matematica}, 17(1), 153-168, 2009.

\bibitem{Pre}
S.B. Presi\'{c},
\newblock Sur une classe d’ in\'{e}quations aux diff\'{e}rences finite et sur la convergence de certaines suites.
\newblock{\em  Publ. Inst. Math. (Beograd)(N.S.)}, 5(19), 75-78, 1965.

\bibitem{Prz}
B. Przeradzki,
\newblock A generalization of Krasnosel’ski\u{i} fixed point theorem for sums of compact and contractible maps with application.
\newblock {\em Cent. Eur. J. Math.}, 10, 2012-2018, 2012.

\bibitem{Sad}
 B.N. Sadovski\u{i},
\newblock On a fixed point principle.
\newblock {\em Funktsional. Anal. i Prilozhen}, 1, 74-76, 1967.

\bibitem{Vet}
C. Vetro, D. Wardowski, 
\newblock Krasnosel’ski\u{i}-Schaefer type method in the existence problems. 
\newblock {\em Topol. Methods Nonlinear Anal.}, 54, 131-139, 2019.

\bibitem{War}
D. Wardowski,
\newblock Family of mappings with an equicontractive-type condition.
\newblock {\em J. Fixed Point Theory Appl.}, 22(55), 2020.

\bibitem{War1}
D. Wardowski,
\newblock A local fixed point theorem and its application to linear operators. 
\newblock {\em J. Nonlinear Convex Anal.}, 20, 2217–2223, 2019.

\bibitem{War2}
D. Wardowski, 
\newblock Solving existence problems via F-contractions. \newblock {\em Proc. Am. Math. Soc.}, 146, 1585-1598, 2018.


\end{thebibliography}

\end{document}